
\documentclass[conference]{IEEEtran}

\IEEEoverridecommandlockouts                              
\overrideIEEEmargins

\usepackage{graphics} 
\usepackage{epsfig} 
\usepackage{amsmath} 
\usepackage{amssymb}  
\usepackage[latin1]{inputenc}
\usepackage{amsfonts}
\usepackage{subfigure}
\usepackage{color}
\usepackage{extarrows}

\newtheorem{thh}{Theorem}
\newtheorem{prop}{Proposition}

\title{\LARGE \bf
Identification of fractional order  systems  using modulating functions method}

\author{Da-Yan Liu, Taous-Meriem~Laleg-Kirati, Olivier Gibaru and Wilfrid
Perruquetti
\thanks{D.Y. Liu, and T.M. Laleg-Kirati are with Computer, Electrical and Mathematical Sciences and Engineering Division,  King Abdullah university of science and technology, KSA
 {\tt\small Dayan.Liu@kaust.edu.sa};
 {\tt\small taousmeriem.laleg@kaust.edu.sa}}
\thanks{O. Gibaru is with LSIS  (CNRS, UMR 7296), Arts et Métiers
ParisTech, 8 Boulevard Louis XIV, 59046 Lille Cedex,
France
        {\tt\small olivier.gibaru@ensam.eu}}%
\thanks{W. Perruquetti is with LAGIS (CNRS, UMR 8146), \'{E}cole Centrale de Lille, BP 48,
Cit\'e Scientifique, 59650 Villeneuve d'Ascq, France
        {\tt\small wilfrid.perruquetti@inria.fr}}%
\thanks{O. Gibaru and W. Perruquetti are with L'\'{E}quipe Projet Non-A, INRIA Lille-Nord Europe,
40, Avenue Halley,  59650 Villeneuve d'Ascq, France. }
}


\begin{document}

\maketitle
\thispagestyle{empty}
\pagestyle{empty}

\begin{abstract}
The modulating functions method has been used for the identification  of linear and nonlinear systems. In this paper, we generalize this method to the on-line identification of fractional order systems based on the Riemann-Liouville fractional
derivatives. First, a new fractional integration by parts formula involving the fractional derivative of a modulating function is given. Then,  we apply this formula to a fractional order system, for which the fractional derivatives of the input and the output can be transferred into the ones of  the modulating functions. By choosing a set of  modulating functions, a linear system of algebraic equations is obtained. Hence, the unknown  parameters  of a fractional order system can be estimated by solving a linear system. Using this method, we do not need any  initial values which are usually unknown and not equal to zero. Also we do not need to estimate the fractional derivatives of noisy output. Moreover, it is shown that the proposed estimators are robust against high frequency sinusoidal noises and the ones due to a class of stochastic processes. Finally, the efficiency and the stability of the proposed method is confirmed by some numerical simulations.
\end{abstract}

\section{Introduction} \label{section1}

Fractional differential equations and fractional  integrals  are gaining importance in research community because of their capacity to accurately describe  real world processes. The flow of fluid in a porous media, the conduction of heat in a semi-infinite slab, the voltage-current relation in a semi infinite transmission line are such examples of processes naturally modeled by fractional differential equations or fractional integrals.

This paper is dealing with the identification of fractional order dynamical systems.   The identification of such systems has been used for instance, for the estimation of the state of charge of lead acid batteries \cite{SaAoOuGrRaRo:06}, and for the identification in  thermal systems \cite{GaPo:11,GaPoKa:11}.  The goal of system  identification is to estimate the parameters of a model from system input/output measurements. Different methods have been proposed for the identification of fractional order systems. Most of them consist in the generalization to fractional order systems of standard methods that were used in the identification of systems with integer order derivatives.  We can classify these methods into time domain methods and frequency domain methods. Time-domain methods have been introduced for example in \cite{OuLeMa:96, DjVoCh:12},  where a method based on the discretization of a fractional differential equation using Grunwald definition has been introduced, and the parameters have been estimated using least square approach. In \cite{TrPoOuLe:99}, a method based on the approximation of a fractional integrator by a rational model has been proposed.  In \cite{AoMaLeOu:07},  the use of methods based on  fractional orthogonal bases has been introduced.  Other techniques can be also found for example in   \cite{CoOuPoBa:01}, \cite{CoOuBaBa:00},  \cite{MaAoSaOu:06} and the references therein.


In this paper, we are interested in the identification of fractional order systems using modulating functions method in case of noisy measurements. Modulating functions method has been developed by Shinbrot  in \cite{Shinbrot54, Shinbrot57} to estimate the parameters of a state space representation. Thanks to the properties of  modulating functions,
the fractional differential equation defining a fractional order system is transformed into a linear system of algebraic equations.
Hence, instead of solving a fractional differential equation where the initial values are often unknown, the problem of identification is transformed into solving a linear system  where the initial conditions are not required. Generalization of modulating functions method to fractional systems has been already proposed, for example in \cite{Jan:10}. However, the authors of this paper proposed only to reduce  the orders of the  derivatives in a fractional differential equation. Moreover, the noisy case has not been considered.

In the next section, basic definitions of fractional derivatives and modulating functions are recalled. Then in Section \ref{section3},  modulating functions method is applied  to the identification of fractional order linear systems. Error analysis in both the continuous and discrete cases are presented in Section \ref{section4}. Numerical results are presented in Section \ref{section5}, followed by  conclusions summarizing the main results obtained.

\section{Preliminary } \label{section2}

In this section, first we recall the definitions and some useful properties of the  Riemann-Liouville  fractional derivative and  modulating functions. Then, a new  fractional  integration by parts formula is given.

\subsection{Riemann-Liouville fractional derivative}

Let $f$ be a continuous function defined on $\mathbb{R}$, then
the Riemann-Liouville fractional
derivative  of $f$ is defined by (see \cite{Podlubny2} p. 62): $\forall \, t \in \mathbb{R}_+^*$,
\begin{equation}\label{Eq_RiemannLiouvillle}
{\mathrm{D}}_{t}^{\alpha}f(t):=\frac{1}{\Gamma(l-\alpha)}\frac{d^{l}}{dt^{l}}%
\int_{0}^{t}\left(  t-\tau\right)  ^{l-\alpha-1}%
f(\tau)\,d\tau,
\end{equation}
where $ l-1 \leq \alpha<l$ with $l\in\mathbb{N}^{\ast}$, and $\Gamma(\cdot)$ is the  Gamma function (see \cite{Abramowitz}
p. 255). As an example,  using (\ref{Eq_RiemannLiouvillle})
the fractional derivative of an $n^{th}$ ($n\in\mathbb{N}$) order polynomial
is given by
(see \cite{Podlubny2} p. 72): $\forall \, t \in \mathbb{R}_+^*$,
\begin{align}
{\mathrm{D}}_{t}^{\alpha}t^{n}&=\frac{\Gamma(n+1)}{\Gamma(n+1-\alpha
)}\, t^{n-\alpha}.\label{Eq_Derivative_poly}
\end{align}

We assume that the fractional derivative and the Laplace transform of $f$ both exist, then
the Laplace transform of the fractional derivative of $f$ is given by (see \cite{Kilbas} p. 284): $\forall \, s \in \mathbb{C}$,
\begin{align}
\mathcal{L}\left\{{\mathrm{D}}_{t}^{\alpha}f(t)\right\}(s)&=s^{\alpha}\hat{f}(s) -\sum_{i=0}^{l-1} s^i \left[{\mathrm{D}}_{t}^{\alpha-i-1}f(t)\right]_{t=0},\label{Eq_Laplace1}
\end{align}
where  $\hat{f}$ denotes the Laplace transform of $f$, and $s$ denotes the variable in the frequency domain.

Finally,  we recall some results on  the existence and the initial values of the fractional derivative in the following proposition.
\begin{prop}(see \cite{Podlubny2} pp. 75-77) \label{Prop_0}
If $f \in \mathcal{C}^{l-1}([0,T])$ and $f^{(l)} \in \mathcal{L}([0,T])$ where $T \in \mathbb{R}^*_+$, then
the Riemann-Liouville fractional derivative ${\mathrm{D}}_{t}^{\alpha}f(t)$ exists, where $ l-1 \leq \alpha<l$ with $l\in\mathbb{N}^{\ast}$.
Moreover, the condition $f^{(i)}(0)=0$, for $i=0,\ldots,l-1$, is equivalent to the following condition:
$\left[{\mathrm{D}}_{t}^{\alpha}f(t)\right]_{t=0}=0.$
\end{prop}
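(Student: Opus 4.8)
The plan is to reduce the whole statement to a single explicit representation of ${\mathrm{D}}_{t}^{\alpha}f$, obtained by integrating by parts $l$ times inside the defining integral~(\ref{Eq_RiemannLiouvillle}); this representation delivers both the existence and the structure needed for the equivalence of initial conditions. Write $I^{\beta}g(t):=\frac{1}{\Gamma(\beta)}\int_{0}^{t}(t-\tau)^{\beta-1}g(\tau)\,d\tau$ for the Riemann--Liouville integral of order $\beta>0$, so that ${\mathrm{D}}_{t}^{\alpha}f=\frac{d^{l}}{dt^{l}}I^{l-\alpha}f$. Since $f\in\mathcal{C}^{l-1}([0,T])$, one integration by parts turns $I^{l-\alpha}f$ into $\frac{f(0)}{\Gamma(l-\alpha+1)}t^{l-\alpha}+I^{l-\alpha+1}f'$; iterating $l$ times (the last step being licit because $f^{(l)}\in\mathcal{L}([0,T])$) gives $I^{l-\alpha}f(t)=\sum_{k=0}^{l-1}\frac{f^{(k)}(0)}{\Gamma(l-\alpha+k+1)}t^{l-\alpha+k}+I^{2l-\alpha}f^{(l)}(t)$. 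Applying $\frac{d^{l}}{dt^{l}}$ termwise --- using $\frac{d^{l}}{dt^{l}}t^{\mu}=\frac{\Gamma(\mu+1)}{\Gamma(\mu+1-l)}t^{\mu-l}$ on the polynomial part (so that the constants collapse via $\Gamma(z+1)=z\Gamma(z)$) and $\frac{d}{dt}I^{\beta}=I^{\beta-1}$ on the remainder (legitimate since every order stays $\geq l-\alpha>0$) --- I obtain
\begin{equation*}
{\mathrm{D}}_{t}^{\alpha}f(t)=\sum_{k=0}^{l-1}\frac{f^{(k)}(0)}{\Gamma(k-\alpha+1)}\,t^{k-\alpha}+\frac{1}{\Gamma(l-\alpha)}\int_{0}^{t}(t-\tau)^{l-\alpha-1}f^{(l)}(\tau)\,d\tau .
\end{equation*}
Existence for $t\in(0,T]$ is then immediate: the finite sum is well defined, and the remaining integral converges because $l-\alpha-1>-1$ and $f^{(l)}\in\mathcal{L}([0,T])$.

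For the equivalence I would examine this expression as $t\to0^{+}$. If $f^{(i)}(0)=0$ for $i=0,\dots,l-1$, the sum vanishes identically, so ${\mathrm{D}}_{t}^{\alpha}f(t)$ reduces to the remainder integral $I^{l-\alpha}f^{(l)}(t)$, which is $O(t^{l-\alpha})$ and therefore tends to $0$; hence $[{\mathrm{D}}_{t}^{\alpha}f(t)]_{t=0}=0$. Conversely, arguing by contraposition, suppose not all the $f^{(i)}(0)$ vanish and let $k_{0}$ be the least index with $f^{(k_{0})}(0)\neq0$; since $k_{0}\leq l-1\leq\alpha$ the exponent $k_{0}-\alpha$ is nonpositive, so the single term $\frac{f^{(k_{0})}(0)}{\Gamma(k_{0}-\alpha+1)}t^{k_{0}-\alpha}$ dominates both the remaining polynomial terms (which have strictly larger exponents) and the $O(t^{l-\alpha})$ remainder as $t\to0^{+}$. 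Consequently $[{\mathrm{D}}_{t}^{\alpha}f(t)]_{t=0}$ is either infinite or equals the nonzero constant $f^{(l-1)}(0)$, in either case not $0$, which is the desired contradiction.

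The part that is routine but demands care is the double bookkeeping --- the $l$ successive integrations by parts producing exactly the boundary terms $f^{(k)}(0)t^{l-\alpha+k}/\Gamma(l-\alpha+k+1)$, and then the $l$ differentiations bringing them to $f^{(k)}(0)t^{k-\alpha}/\Gamma(k-\alpha+1)$ --- together with justifying $\frac{d}{dt}I^{\beta}g=I^{\beta-1}g$ for $\beta>1$, i.e.\ differentiation of the remainder under the integral sign. The genuine obstacle is the $t\to0^{+}$ limit of the remainder in the sufficiency direction: the estimate $I^{l-\alpha}f^{(l)}(t)=O(t^{l-\alpha})$ uses the regularity encoded in ``$f^{(l)}\in\mathcal{L}([0,T])$'' (it would fail for a merely $L^{1}$ derivative such as $f^{(l)}(\tau)=\tau^{-1/2}$ when $l-\alpha<\tfrac{1}{2}$), so this is the point I would make rigorous first, pinning down precisely which function class $\mathcal{L}$ denotes.
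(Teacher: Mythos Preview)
The paper does not supply a proof of this proposition; it is quoted from Podlubny's monograph (pp.~75--77). Your route --- integrating by parts $l$ times to reach the decomposition ${\mathrm{D}}_{t}^{\alpha}f(t)=\sum_{k=0}^{l-1}\frac{f^{(k)}(0)}{\Gamma(k-\alpha+1)}t^{k-\alpha}+I^{l-\alpha}f^{(l)}(t)$ and then reading off both existence and the initial-value equivalence from the behaviour of each term as $t\to0^{+}$ --- is exactly the argument found on those pages, so there is nothing to contrast.

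The reservation you flag at the end is genuine and in fact shows that the equivalence, as transcribed here with $f^{(l)}\in\mathcal{L}([0,T])$ read as $L^{1}$, can fail. Take $l=1$, $\alpha=\tfrac{3}{4}$, $f(t)=t^{1/2}$: then $f\in\mathcal{C}^{0}([0,T])$, $f'(t)=\tfrac{1}{2}t^{-1/2}\in L^{1}([0,T])$, and $f(0)=0$, yet by~(\ref{Eq_Derivative_poly}) one has ${\mathrm{D}}_{t}^{3/4}f(t)=\tfrac{\Gamma(3/2)}{\Gamma(3/4)}\,t^{-1/4}\to\infty$. The cited source states the equivalence under a stronger regularity hypothesis on $f^{(l)}$; with $f^{(l)}$ bounded (in particular, continuous) your $O(t^{l-\alpha})$ bound on the remainder is immediate and both directions of your argument go through exactly as written.
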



\subsection{Modulating functions}

Let $l \in \mathbb{N}^*$, $T \in \mathbb{R}^*_+$, and $g$ be a function satisfying  the following properties:
\begin{description}
  \item[$(P_1):$]
$g \in \mathcal{C}^l([0,T])$;

  \item[$(P_2):$]
$ g^{(j)}(0)= g^{(j)}(T)=0, \  \forall \,j=0,1,\dots, l-1$,
\end{description}
then $g$ is called  \emph{modulating function of order $l$}  \cite{Preising}.
Hence, according to Proposition \ref{Prop_0}, the modulating function $g$ has also the following properties:
\begin{description}
  \item[$(P_3):$]
$\forall \, 0 \leq \beta < l$, \ ${\mathrm{D}}_{t}^{\beta} g(t)$ exists;
  \item[$(P_4):$]
$\forall \, 0 \leq \beta < l$, \ $\left[{\mathrm{D}}_{t}^{\beta} g(t)\right]_{t=0}=0$.
\end{description}

\subsection{Fractional integration by parts}
Another useful  property of the modulating functions is given in the following theorem.
\begin{thh} \label{lemma1}
Let $y$ be a function such that the $\alpha^{th}$ order fractional derivative exits and  $g$ be a modulating function of order $l$ with $l-1 \leq \alpha<l$ with $l\in\mathbb{N}^{\ast}$. Then, we have:
\begin{equation}
 \int_0^T g(T-t)\, {\mathrm{D}}^{\alpha}_t y(t)\,dt=  \int_0^T  {\mathrm{D}}^{\alpha}_t  g(t)\, y(T-t)\,dt,
\end{equation}
where $T \in \mathbb{R}^*_+$.
\end{thh}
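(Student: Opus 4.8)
The plan is to reduce the fractional statement to the classical integration by parts formula by unfolding the Riemann--Liouville definition (\ref{Eq_RiemannLiouvillle}) and exploiting the vanishing conditions $(P_1)$--$(P_4)$ of the modulating function. Write $\alpha$ with $l-1\le\alpha<l$. On the left-hand side, substitute ${\mathrm{D}}^{\alpha}_t y(t)=\frac{1}{\Gamma(l-\alpha)}\frac{d^l}{dt^l}\int_0^t(t-\tau)^{l-\alpha-1}y(\tau)\,d\tau$, and then integrate by parts $l$ times in the variable $t$, moving each of the $l$ derivatives off the fractional integral of $y$ and onto the factor $g(T-t)$. Each boundary term involves $g^{(j)}(T-t)$ evaluated at $t=0$ or $t=T$, i.e.\ $g^{(j)}(T)$ or $g^{(j)}(0)$ for $0\le j\le l-1$, which vanish by $(P_2)$; the surviving term is $\frac{(-1)^l}{\Gamma(l-\alpha)}\int_0^T g^{(l)}(T-t)\Big(\int_0^t(t-\tau)^{l-\alpha-1}y(\tau)\,d\tau\Big)\,dt$. (One should be slightly careful that the innermost fractional integral and its first $l-1$ derivatives in $t$ vanish at $t=0$, so that no boundary contributions appear from that side either; this holds because $(t-\tau)^{l-\alpha-1}$ integrated against a bounded $y$ produces a function that is $O(t^{l-\alpha})$ near $0$ together with its derivatives up to order $l-1$.)

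Next I would carry out the symmetric manipulation on the right-hand side: expand ${\mathrm{D}}^{\alpha}_t g(t)=\frac{1}{\Gamma(l-\alpha)}\frac{d^l}{dt^l}\int_0^t(t-\tau)^{l-\alpha-1}g(\tau)\,d\tau$, and integrate by parts $l$ times in $t$, transferring the $l$ derivatives onto $y(T-t)$. Here the boundary terms vanish because, by $(P_3)$--$(P_4)$ (equivalently Proposition~\ref{Prop_0} applied to $g$), the function $\int_0^t(t-\tau)^{l-\alpha-1}g(\tau)\,d\tau$ and its derivatives up to order $l-1$ vanish at $t=0$, and at $t=T$ one uses that we are integrating against $y(T-t)$ which contributes $y(0)$-type factors — so instead I would transfer derivatives the other way, onto $y(T-t)$, and check the $t=T$ boundary terms vanish by the $t=0$ behavior of the $g$-fractional-integral after the substitution. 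In any case the result is $\frac{(-1)^l}{\Gamma(l-\alpha)}\int_0^T y^{(?)}\cdots$; the cleanest route is to bring \emph{both} sides to the same double-integral form and then identify them by the change of variables $t\mapsto T-t$ together with Fubini's theorem.

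Concretely, the unifying step is this: after the integrations by parts, both sides equal (up to the change of variable $t\leftrightarrow T-t$ in one of them) an iterated integral of the form
\begin{equation}
\frac{(-1)^l}{\Gamma(l-\alpha)}\int_0^T\!\!\int_0^T K(t,\tau)\,g^{(l)}(\cdot)\,y(\cdot)\,d\tau\,dt,
\end{equation}
with the kernel $(t-\tau)^{l-\alpha-1}$ supported on $\tau<t$; swapping the order of integration and performing the reflection $u=T-t$, $v=T-\tau$ turns the left-hand expression into the right-hand one. Alternatively — and this is probably the slicker writeup — one can avoid the double integration by parts entirely: recall the classical fact that for a modulating function $g$ of order $l$ one has ${\mathrm{D}}^{\alpha}_t g(t)=I^{l-\alpha}_t g^{(l)}(t)$ (Riemann--Liouville derivative equals a fractional integral of the $l$-th classical derivative, valid precisely because the boundary terms drop out under $(P_2)$), write ${\mathrm{D}}^{\alpha}_t y$ similarly when it is smooth, and then the identity becomes a statement about the fractional integral operator $I^{l-\alpha}$ which is straightforward from Fubini.

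The main obstacle I anticipate is \emph{justifying the vanishing of all boundary terms rigorously under the stated hypotheses}: the theorem only assumes "the $\alpha$-th order fractional derivative of $y$ exists," which is weaker than $y$ being $\mathcal{C}^{l-1}$, so the integration-by-parts steps on the $y$-side must be done on the fractional \emph{integral} of $y$ (which is automatically smooth enough) rather than on $y$ itself, and one must track the regularity carefully. The $g$-side is unproblematic because $(P_1)$--$(P_4)$ give everything needed. A secondary technical point is the interchange of the order of integration / the limiting argument needed to move the $\frac{d^l}{dt^l}$ inside and distribute it — this requires either dominated convergence or an absolute-continuity argument, but it is routine once the boundary terms are shown to vanish.
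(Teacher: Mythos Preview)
Your approach is correct and \emph{genuinely different} from the paper's. The paper works entirely in the frequency domain: it recognizes both sides of the identity as convolutions, applies the convolution theorem to write the left-hand side as $\hat g(s)\,\mathcal L\{\mathrm D_t^\alpha y\}(s)$, uses the Laplace formula (\ref{Eq_Laplace1}) to expand this as $s^\alpha\hat g(s)\hat y(s)-\sum_i s^i\hat g(s)[\mathrm D_t^{\alpha-i-1}y]_{t=0}$, observes via $(P_4)$ and $(P_2)$ that $s^\alpha\hat g(s)=\mathcal L\{\mathrm D_t^\alpha g\}(s)$ and $s^i\hat g(s)=\mathcal L\{g^{(i)}\}(s)$, inverts, and kills the remaining initial-condition sum using $g^{(i)}(T)=0$. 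No time-domain integration by parts appears at all.

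Your route---unfold the Riemann--Liouville definition, move the $l$ classical derivatives onto $g(T-t)$ via ordinary integration by parts (boundary terms killed by $(P_2)$ alone, incidentally; your extra worry about the behaviour of $I^{l-\alpha}y$ at $t=0$ is not actually needed since $g^{(j)}(T)=0$ handles that endpoint), then apply Fubini and the reflection $t\mapsto T-t$, and finally recognize $I^{l-\alpha}g^{(l)}=\mathrm D^\alpha g$ for a modulating function---is more elementary and keeps everything in the time domain. The ``slicker'' variant you sketch at the end (use $\mathrm D^\alpha g=I^{l-\alpha}g^{(l)}$ from the start and reduce to a single Fubini/change-of-variable) is indeed the cleanest way to write your argument, and it avoids the muddle in your middle paragraph where you start transferring derivatives onto $y(T-t)$ before realizing $y$ need not be $l$-times differentiable. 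The paper's Laplace approach is shorter and makes the elimination of the unknown initial values $[\mathrm D_t^{\alpha-i-1}y]_{t=0}$ very explicit, which is thematically important for the identification application; your approach has the advantage of not requiring Laplace transforms to exist and of being self-contained within the definitions already introduced. A minor slip: your $(-1)^l$ is spurious---each integration by parts picks up a minus sign from differentiating in $t$, but also a minus sign from $\frac{d}{dt}g(T-t)=-g'(T-t)$, so they cancel.
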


\begin{proof}
By applying the convolution theorem of the Laplace transform (see \cite{Abramowitz}, p. 1020), we get:
\begin{equation}\label{Eq_transf1}
\mathcal{L}\left\{\int_0^T g(T-t)\,{\mathrm{D}}^{\alpha}_t y(t)\,dt\right\}(s)=  \hat{g}(s) \mathcal{L}\left\{ {\mathrm{D}}^{\alpha}_t  y(t)\right\}(s).
\end{equation}
Then,  using (\ref{Eq_Laplace1}) we obtain:
\begin{equation}\label{Eq_transf2}
\begin{split}
&\hat{g}(s) \mathcal{L}\left\{ {\mathrm{D}}^{\alpha}_t  y(t)\right\}(s)=\\ &\qquad
\hat{g}(s) s^{\alpha}\hat{y}(s) -\sum_{i=0}^{l-1} s^i \hat{g}(s) \left[{\mathrm{D}}_{t}^{\alpha-i-1} y(t)\right]_{t=0}.
\end{split}
\end{equation}
Moreover, using (\ref{Eq_Laplace1}) and $(P_4)$  we get:
\begin{align}
 \mathcal{L}\left\{ {\mathrm{D}}^{\alpha}_t  g(t)\right\}(s)=
s^{\alpha}\, \hat{g}(s),\label{Eq_transf31}\\
 \mathcal{L}\left\{g^{(i)}(t)\right\}(s)=
s^{i}\, \hat{g}(s), \label{Eq_transf32}
\end{align}
for $i=0,\ldots,l-1$.
Consequently, by applying (\ref{Eq_transf31}), (\ref{Eq_transf32})  and the inverse of the Laplace transform to (\ref{Eq_transf2}), we obtain:
\begin{equation}\label{Eq_transf4}
\begin{split}
&\mathcal{L}^{-1}\left\{\hat{g}(s) \mathcal{L}\left\{ {\mathrm{D}}^{\alpha}_t  y(t)\right\}(s)\right\}(T)=\\ &\qquad  \qquad
\mathcal{L}^{-1}\left\{\mathcal{L}\left\{ {\mathrm{D}}^{\alpha}_t  g(t)\right\}(s)\ \hat{y}(s)\right\}(T) \\ &\qquad \qquad \qquad -\sum_{i=0}^{l-1} g^{(i)}(T) \left[{\mathrm{D}}_{t}^{\alpha-i-1}y(t)\right]_{t=0}.
\end{split}
\end{equation}
Using $(P_2)$,  the initial conditions $\left[{\mathrm{D}}_{t}^{\alpha-i-1}y(t)\right]_{t=0}$, for $i=0,1,\dots, l-1$,
can be eliminated.

Finally, this proof can be completed by applying the convolution theorem of the Laplace transform  to (\ref{Eq_transf4}).
\end{proof}

According to the previous theorem, we can see that by working in the frequency domain: on the one hand, we can obtain an integral formula which can be considered as the generalization  of the classical integration by parts formula.
Another fractional integration by parts formula has also been  given in \cite{Podlubny}, however the Caputo fractional derivative was involved in the formula; on the other hand, the initial conditions of the fractional derivatives of $y$ can be eliminated using a modulating function. In fact, the idea of obtaining this theorem is inspired by the recent algebraic parametric estimation technique \cite{mfhsr,garnier,Mboup2009b,Liu2011d,Liu2008,Wilfrid1,Wilfrid2,trapero}, which eliminates the unknown initial conditions by applying algebraic manipulations in the frequency domain.

\section{Identification of fractional order  systems} \label{section3}

\subsection{Fractional order linear systems}

In this section, we consider a class of   fractional order  linear systems which are defined by the following fractional differential equation:
$\forall\, t \in I=[0,T] \subset \mathbb{R}^*_+$,
\begin{equation}\label{Eq_problem1}
 \sum_{i=0}^{L} a_i \, {\mathrm{D}}^{\alpha_i}_t y(t) = \sum_{j=0}^{M} b_j \, {\mathrm{D}}^{\beta_j}_t u(t),
\end{equation}
where $y$ is the output, $u$ is the input,    $a_i, b_j \in  \mathbb{R}^*$ are unknown parameters to be identified, and $\alpha_i, \beta_j \in \mathbb{R}_+$
 are assumed $0 \leq \alpha_0<\alpha_1<\cdots<\alpha_L$, $0 \leq \beta_0<\beta_1<\cdots<\beta_M$ with $L,M \in \mathbb{N}$.

Let  $y^{\varpi}$ be a noisy observation of $y$ on the
interval $I$:
\begin{equation} \label{Eq_signal}
\forall\, t\in I, \ y^{\varpi}(t)=y(t)+\varpi(t),
\end{equation}
where  $\varpi$ is
an integrable noise\footnote{More generally, the noise is a
stochastic process, which is  integrable
in the sense of convergence in mean square \cite{Liu2011d}.}. We are going to estimate the unknown parameters in (\ref{Eq_problem1}) using the input $u$ and the observation $y^{\varpi}$.

One of the standard methods for the identification of systems with integer order derivatives is to use the least-squares method \cite{Ljung}. This method was generalized to fractional order systems in \cite{OuLeMa:96, DjVoCh:12}. In this method, we need to estimate the fractional derivatives of $y$ and $u$ using a fractional order differentiator \cite{Chen, Liu2012a,Liu2012b}.
However, a fractional order differentiator often contains a truncated term error. An alternative method consists in solving the problem in the frequency domain by applying the Laplace transform to (\ref{Eq_problem1}). However, according to (\ref{Eq_Laplace1}), this application can produce  unknown initial conditions.
In the next  subsection, we are going to apply  modulating functions method to eliminate these unknown initial conditions.


\subsection{Application of modulating functions method}

We denote $W=L+M+1$, $\alpha=\max\left(\alpha_L,\beta_M\right)$, and $l=\lceil \alpha \rceil$, where $\lceil \alpha \rceil$ denotes the smallest integer greater than or equal to $\alpha$.
Then, we take a set of modulating  functions $\left\{g_{n}\right\}_{n=1}^{N}$ with $W \leq N \in \mathbb{N}$.
Using Theorem \ref{lemma1}, we can give the following proposition.

\begin{prop}\label{Prop_1}
Let $\left\{g_{n}\right\}_{n=1}^{N}$ be a set of modulating functions of order $l$. If we assume that $b_0=1$  in  the fractional order  linear system defined by (\ref{Eq_problem1}), then the unknown parameters in this fractional order  linear system
can be estimated by solving the following linear system:
\begin{align}\label{Eq_system}
\left(\begin{array}{c}
   {U}_N \
   {Y}_N^{\varpi} \\
  \end{array}
\right)
\left(\begin{array}{c}
    {\tilde{B}} \\
    {\tilde{A}} \\
  \end{array}
\right) =
{I}_N,
\end{align}
where ${\tilde{B}}=\left({\tilde{b}}_1,\cdots,{\tilde{b}}_M\right)^T$, ${\tilde{A}}=\left({\tilde{a}}_0,\cdots,{\tilde{a}}_L\right)^T$ are estimators of the unknown parameters, and
\begin{align}
{U}_N(n,j)&= -\int_0^T  {\mathrm{D}}^{\beta_j}_t g_{n}(t)\, u(T-t) \,dt, \label{Eq_intg1}\\
{Y}_N^{\varpi}(n,i+1)&= \int_0^T {\mathrm{D}}^{\alpha_i}_t g_{n}(t)\, y^{\varpi}(T-t) \,dt,\label{Eq_intg2} \\
{I}_N(n)&=\int_0^T {\mathrm{D}}^{\beta_0}_t g_{n}(t)\, u(T-t) \,dt, \label{Eq_intg3}
\end{align}
for $n=1,\ldots,N$, $j=1,\ldots, M$ and $i=0,\ldots, L$.
\end{prop}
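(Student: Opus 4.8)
The plan is to start from the governing fractional differential equation \eqref{Eq_problem1} and integrate each side against the modulating function $g_n(T-t)$ over the interval $[0,T]$. Since \eqref{Eq_problem1} holds pointwise on $I$, multiplying by $g_n(T-t)$ and integrating preserves equality, yielding for each $n=1,\ldots,N$:
\begin{equation*}
\sum_{i=0}^{L} a_i \int_0^T g_n(T-t)\,{\mathrm{D}}^{\alpha_i}_t y(t)\,dt = \sum_{j=0}^{M} b_j \int_0^T g_n(T-t)\,{\mathrm{D}}^{\beta_j}_t u(t)\,dt.
\end{equation*}
The point of this manipulation is that each term now has exactly the shape to which Theorem \ref{lemma1} applies: $g_n$ is a modulating function of order $l=\lceil\alpha\rceil$, and since $\alpha_i\le\alpha_L\le\alpha$ and $\beta_j\le\beta_M\le\alpha$ we have $l-1<\alpha_i<l$ (or $\alpha_i$ at a lower level, still covered) and likewise for $\beta_j$, so the hypotheses of Theorem \ref{lemma1} — that the relevant fractional derivative of $y$ (resp. $u$) exists and that $g_n$ is a modulating function of sufficient order — are met. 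One subtlety to note is whether $g_n$, being of order $l$, is also a modulating function of the (possibly smaller) order needed for a given $\alpha_i$; since properties $(P_1)$--$(P_2)$ for order $l$ imply those for any order $\le l$, this is immediate.

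Next I would apply Theorem \ref{lemma1} term by term to transfer the fractional derivative from the signal onto the modulating function:
\begin{equation*}
\int_0^T g_n(T-t)\,{\mathrm{D}}^{\alpha_i}_t y(t)\,dt = \int_0^T {\mathrm{D}}^{\alpha_i}_t g_n(t)\, y(T-t)\,dt,
\end{equation*}
and analogously for the input terms with $\beta_j$. This converts the integro-differential identity into a purely algebraic relation among the unknowns $a_i$ and $b_j$, with coefficients that are explicit integrals of known quantities (the input $u$, the modulating functions, and their fractional derivatives). At this stage I would substitute the normalization $b_0=1$, moving the $j=0$ term to the right-hand side so that it becomes the known quantity ${I}_N(n)=\int_0^T {\mathrm{D}}^{\beta_0}_t g_n(t)\,u(T-t)\,dt$, while the remaining unknowns are $b_1,\ldots,b_M$ and $a_0,\ldots,a_L$.

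Then I would assemble these $N$ scalar equations into matrix form. Reading off the coefficient of $b_j$ gives the entry ${U}_N(n,j)=-\int_0^T {\mathrm{D}}^{\beta_j}_t g_n(t)\,u(T-t)\,dt$ (the sign coming from moving the input terms across the equality), and the coefficient of $a_i$ gives ${Y}_N(n,i+1)=\int_0^T {\mathrm{D}}^{\alpha_i}_t g_n(t)\,y(T-t)\,dt$. Finally I would replace the exact output $y$ by the noisy observation $y^{\varpi}$ in the $Y$-block, obtaining ${Y}_N^{\varpi}$ and the estimators $\tilde{A},\tilde{B}$ in place of the true parameters; this substitution is exactly what turns the exact linear identity into the estimation system \eqref{Eq_system}. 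The main obstacle, such as it is, is not analytical depth but bookkeeping: one must carefully track the index shift between $a_i$ (indexed $0,\ldots,L$) and the matrix column $i+1$, the sign convention on the $U$-block, and the separation of the $b_0$ term; the genuinely nontrivial analytic content is entirely contained in Theorem \ref{lemma1}, which we are entitled to invoke. It is also worth remarking that the proposition only asserts that the parameters \emph{can be estimated by solving} \eqref{Eq_system}, so no statement about invertibility of the $N\times W$ system (identifiability) is required here — that is deferred to the later error analysis.
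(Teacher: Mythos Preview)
Your proposal is correct and follows essentially the same approach as the paper: multiply \eqref{Eq_problem1} by $g_n(T-t)$, integrate over $[0,T]$, apply Theorem~\ref{lemma1} term by term, isolate the $b_0=1$ term as the right-hand side, and replace $y$ by $y^{\varpi}$. Your additional remarks on why a modulating function of order $l$ suffices for every $\alpha_i,\beta_j\le\alpha$ and on the index/sign bookkeeping are more explicit than the paper's own proof, but the underlying argument is identical.
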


\begin{proof}
By multiplying the modulating functions $g_{n}$ to the equation (\ref{Eq_problem1}) and by integrating between $0$ and $T$, we get:
\begin{equation}\label{Eq_problem2}
\begin{split}
& \int_0^T \sum_{i=0}^{L} a_i \, g_{n}(T-t)\,{\mathrm{D}}^{\alpha_i}_t y(t) \,dt =\\
 &\qquad  \qquad  \int_0^T \sum_{j=0}^{M} b_j \, g_{n}(T-t)\,{\mathrm{D}}^{\beta_j}_t u(t) \,dt,
\end{split}
\end{equation}
for $n=1,\ldots,N$. Then,  using Theorem \ref{lemma1}, we get
\begin{equation}\label{Eq_problem3}
\sum_{j=1}^{M} b_j {U}_N(n,j) + \sum_{i=0}^{L} a_i {Y}_N(n,i+1) = \int_0^T {\mathrm{D}}^{\beta_0}_t g_{n}(t) u(T-t) dt,
\end{equation}
where ${U}_N(n,j)=-\int_0^T  {\mathrm{D}}^{\beta_j}_t g_{n}(t)\, u(T-t) \,dt$, and\\
${Y}_N(n,i+1)= \int_0^T {\mathrm{D}}^{\alpha_i}_t g_{n}(t)\, y(T-t) \,dt$.
Finally, this proof can be completed by substituting $y$ by $y^{\varpi}$ in (\ref{Eq_problem3}).
\end{proof}

Consequently,  thanks to Theorem \ref{lemma1}, instead of estimating (resp. calculating) the fractional derivatives of $y$ (resp. $u$),  we calculate the fractional derivatives of the modulating functions. On the one hand, comparing to $y$,  the  modulating functions are known and without noise.
On the other hand, if the fractional derivatives of $u$ cannot be analytically calculated or  are difficult to calculate, we can solve the problem by calculating the ones of the  modulating functions.

Finally, let us mention that  since the proposed estimators are given in causal case, if we take  the  value of $T$ to be equal to the time where we estimate the parameters, then these estimators can be used for on-line identification applications.

\section{Error analysis} \label{section4}

In this section, we are going to study the noise effect in the integrals obtained in Proposition \ref{Prop_1}. For this purpose, we study the noise error contributions due to a high frequency sinusoidal noise and the ones due to a class of stochastic processes in continuous case and in discrete case, respectively.

\subsection{Error analysis in continuous case}
There are many applications where the output
signal is corrupted by a sinusoidal noise of
higher frequency \cite{trapero}.
Hence, we assume that the noise $\varpi$ is a high frequency sinusoidal noise in this subsection.

By writing $y^{\varpi}=y+\varpi$, the integral  ${Y}_N^{\varpi}(n,i+1)$ given in (\ref{Eq_intg2}), for $n=1,\ldots,N$ and $i=0,\ldots, L$, can be
divided into:
\begin{equation} \label{Eq_cont}
{Y}_{N}^{\varpi}(n,i+1)= {Y}_{N}(n,i+1) + e_{N}^{\varpi}(n,i+1),
\end{equation}
where ${Y}_{N}(n,i+1)$ is given in (\ref{Eq_problem3}), and  the
associated noise error contribution is given by:
\begin{equation} \label{Eq_error}
e_{N}^{\varpi}(n,i+1)= \int_0^T {\mathrm{D}}^{\alpha_i}_t g_{n}(t)\, \varpi(T-t) \,dt.
\end{equation}
Consequently, the estimation errors  for the estimators given in Proposition \ref{Prop_1}
only come from these noise error
contributions. In the following proposition, we give error bounds for these noise error
contributions.

\begin{prop}
We assume that $\forall \, t \in I$, $\varpi(t)=c \sin(\omega t+ \phi)$  with $c, \omega \in \mathbb{R}^*_+$ and $\phi \in [0,2\pi[$.
Moreover, we assume that ${\mathrm{D}}^{\alpha_i+1}_t g_{n}(t)$ exists and is continuous on $[0,T]$, for $n=1,\ldots,N$ and $i=0,\ldots, L$.
Then, we have:
\begin{equation}
\left| e_{N}^{\varpi}(n,i+1)\right|  \leq  \frac{c }{\omega} T C_{\alpha_i+1}   + \frac{c }{\omega} \left| \left[{\mathrm{D}}_{t}^{\alpha_i} g_{n}(t)\right]_{t=T} \right|,
\end{equation}
where $C_{\alpha_i+1}=\displaystyle\sup_{t \in [0,T]}\left| {\mathrm{D}}^{\alpha_i+1}_t g_{n}(t) \right|$, and $e_{N}^{\varpi}(n,i+1)$ is given by  (\ref{Eq_error}).
\end{prop}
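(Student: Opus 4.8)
The plan is to estimate the integral $e_{N}^{\varpi}(n,i+1)=\int_0^T {\mathrm{D}}^{\alpha_i}_t g_{n}(t)\, \varpi(T-t)\,dt$ by exploiting the oscillatory nature of $\varpi$ via a single integration by parts. First I would write $\varpi(T-t)=c\sin(\omega(T-t)+\phi)$ and observe that $\sin(\omega(T-t)+\phi)=\frac{1}{\omega}\frac{d}{dt}\cos(\omega(T-t)+\phi)$, so that the antiderivative with respect to $t$ of $\varpi(T-t)$ is $\frac{c}{\omega}\cos(\omega(T-t)+\phi)$, a function bounded in absolute value by $\frac{c}{\omega}$. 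Integrating by parts with $u={\mathrm{D}}^{\alpha_i}_t g_n(t)$ and $dv=\varpi(T-t)\,dt$ then gives
\begin{equation*}
e_{N}^{\varpi}(n,i+1)=\left[{\mathrm{D}}^{\alpha_i}_t g_n(t)\,\frac{c}{\omega}\cos(\omega(T-t)+\phi)\right]_0^T-\frac{c}{\omega}\int_0^T {\mathrm{D}}^{\alpha_i+1}_t g_n(t)\,\cos(\omega(T-t)+\phi)\,dt,
\end{equation*}
which is legitimate precisely because ${\mathrm{D}}^{\alpha_i+1}_t g_n$ is assumed to exist and be continuous on $[0,T]$, so ${\mathrm{D}}^{\alpha_i}_t g_n$ is $C^1$ there. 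Here I am using that $\frac{d}{dt}{\mathrm{D}}^{\alpha_i}_t g_n(t)={\mathrm{D}}^{\alpha_i+1}_t g_n(t)$; this is the one point that needs a word of justification, and it follows from the Riemann--Liouville definition (\ref{Eq_RiemannLiouvillle}) since differentiating once more in $t$ simply raises the integer-derivative order, matching the definition of the $(\alpha_i+1)$-th derivative.

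Next I would bound the boundary term. At $t=0$ we have, by property $(P_4)$ of modulating functions, $\left[{\mathrm{D}}^{\alpha_i}_t g_n(t)\right]_{t=0}=0$, so the lower endpoint contributes nothing. At $t=T$ the cosine factor is $\cos(\phi)$, whose modulus is at most $1$, leaving a contribution bounded by $\frac{c}{\omega}\left|\left[{\mathrm{D}}^{\alpha_i}_t g_n(t)\right]_{t=T}\right|$. For the remaining integral, I would bound $|\cos(\cdot)|\le 1$ and pull out the supremum $C_{\alpha_i+1}=\sup_{t\in[0,T]}\left|{\mathrm{D}}^{\alpha_i+1}_t g_n(t)\right|$, so that $\left|\frac{c}{\omega}\int_0^T {\mathrm{D}}^{\alpha_i+1}_t g_n(t)\,\cos(\omega(T-t)+\phi)\,dt\right|\le \frac{c}{\omega}\,T\,C_{\alpha_i+1}$. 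Adding the two pieces via the triangle inequality yields exactly the claimed bound $\left|e_{N}^{\varpi}(n,i+1)\right|\le \frac{c}{\omega}T C_{\alpha_i+1}+\frac{c}{\omega}\left|\left[{\mathrm{D}}^{\alpha_i}_t g_n(t)\right]_{t=T}\right|$.

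The main obstacle — really the only non-routine point — is the justification that integration by parts is valid and that the derivative of the fractional derivative of $g_n$ is the $(\alpha_i+1)$-th fractional derivative; once the continuity hypothesis on ${\mathrm{D}}^{\alpha_i+1}_t g_n$ is in hand and property $(P_4)$ is invoked to kill the lower boundary term, the rest is elementary estimation. I would keep the presentation short: state the antiderivative identity for $\varpi(T-t)$, perform the integration by parts, cite $(P_4)$ for the vanishing at $t=0$, and apply $|\cos|\le 1$ together with the definition of $C_{\alpha_i+1}$.
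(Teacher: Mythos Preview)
Your proposal is correct and follows essentially the same route as the paper: a single integration by parts using the antiderivative $\frac{c}{\omega}\cos(\omega(T-t)+\phi)$, the identity $\frac{d}{dt}{\mathrm{D}}^{\alpha_i}_t g_n={\mathrm{D}}^{\alpha_i+1}_t g_n$, property $(P_4)$ to kill the boundary term at $t=0$, and then the obvious bounds $|\cos|\le 1$ and $|{\mathrm{D}}^{\alpha_i+1}_t g_n|\le C_{\alpha_i+1}$. Your write-up is in fact slightly more explicit than the paper's, which compresses the final estimation into a single sentence.
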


\begin{proof} Since   $\frac{d}{d t}\left\{{\mathrm{D}}^{\alpha_i}_t g_{n}(t)\right\}={\mathrm{D}}^{\alpha_i+1}_t g_{n}(t)$ exists, then
by applying integration by parts and $(P_4)$, we get:
\begin{equation} \label{Eq_ineg}
\begin{split}
e_{N}^{\varpi}(n,i+1) = &  \int_0^T {\mathrm{D}}^{\alpha_i}_t g_{n}(t)\, c\sin(\omega(T-t)+\phi) \,dt\\
= & -\frac{c}{\omega} \int_0^T {\mathrm{D}}^{\alpha_i+1}_t g_{n}(t)\, \cos(\omega(T-t)+\phi) \,dt\\
& \quad +  \frac{c}{\omega} \cos(\phi) \left[{\mathrm{D}}_{t}^{\alpha_i} g_{n}(t)\right]_{t=T}.
\end{split}
\end{equation}
If ${\mathrm{D}}^{\alpha_i+1}_t g_{n}(t) \in \mathcal{C}([0,T])$, then this proof can be completed using (\ref{Eq_ineg}).
\end{proof}

According to the previous proposition, if the frequency of the sinusoidal noise is high, then the associated noise error contributions can be negligible. Consequently, the estimators given in Proposition \ref{Prop_1} can cope with this kind of noises.

\subsection{Error analysis in discrete case}

From now on, we assume that the noisy observation $y^{\varpi}$ defined in (\ref{Eq_signal}) is  given in a discrete
case.
Let $y^{\varpi}(t_j)= y(t_j) + \varpi(t_j)$ be a noisy
discrete observation of $y$ given with an equidistant sampling period  $T_s$, where
$T_s = \frac{T}{m}$, $m \in \mathbb{N}^*$, and $t_j=j T_s$,
for $j=0,\cdots,m$.

Since $y^{\varpi}$ is a discrete
measurement, we  apply a numerical integration method
to approximate the integrals  in (\ref{Eq_system}). Let
 $w_0 \geq 0$, $w_m
\geq 0$ and $w_j>0$ for $j=1,\dots, m-1$    be the weights for a given
numerical integration method, where weight $a_0$ (resp. $a_m$) is set to
zero when there is an infinite value at $t_0=0$  (resp. $t_m=T$).
Then, the integral ${Y}_N^{\varpi}(n,i+1)$ given in (\ref{Eq_intg2}), for $n=1,\ldots,N$ and $i=0,\ldots, L$, can be
approximated by:
\begin{equation}
{Y}_{N,m}^{\varpi}(n,i+1):= T_s \sum_{j=0}^m
{w_j}\, g_{n}^{(\alpha_i)}(t_j)\, y^{\varpi}(t_{m-j}),
\end{equation}
where $g_{n}^{(\alpha_i)}(t_j):=\left[{\mathrm{D}}^{\alpha_i}_t g_{n}(t)\right]_{t=t_j}$.
The integrals  given in (\ref{Eq_intg1}) and (\ref{Eq_intg3})  can be approximated in a similar way.

By writing  $y^{\varpi}(t_j)=
y(t_j) + \varpi(t_j)$, we get:
\begin{equation}
{Y}_{N,m}^{\varpi}(n,i+1)= {Y}_{N,m}(n,i+1) + e_{N,m}^{\varpi}(n,i+1),
\end{equation}
where
\begin{align}
{Y}_{N,m}(n,i+1)&= T_s \displaystyle\sum_{j=0}^m
{w_j}\, g_{n}^{(\alpha_i)}(t_j)\, y(t_{m-j}), \\
e_{N,m}^{\varpi}(n,i+1) &= T_s \displaystyle\sum_{j=0}^m
{w_j}\, g_{n}^{(\alpha_i)}(t_j)\, \varpi(t_{m-j}). \label{Eq_discrete}
\end{align}
Thus the integral ${Y}_N^{\varpi}(n,i+1)$ is corrupted
by two sources of errors:
\begin{itemize}
  \item  the  numerical error which comes from a  numerical integration method,
  \item the noise error
contribution $e_{N,m}^{\varpi}(n,i+1)$.
\end{itemize}
Consequently, the estimation errors  for the estimators given in Proposition \ref{Prop_1}
come from both the numerical errors and the noise error
contributions in  the discrete noisy case.

It is well known that if the value of $T$ is set, then when $T_s$ tends to $0$, $i.e.$   $m \rightarrow +\infty$, the numerical errors tend to $0$.   In the next subsection, we are going to study the effect of the sampling period
on the noise error contributions.

\subsection{Influence of sampling period  on  noise error contributions}

In this subsection,  we consider a family of noises which are stochastic processes satisfying  the following
conditions:
\begin{description}
  \item[$(C_1):$] for any $t,s \in I$, $t\neq s$, $\varpi(t)$ and $\varpi(s)$ are
  independent;
  \item[$(C_2):$] the mean value function of $\varpi(\cdot)$ denoted by $\mathrm{E}[\cdot]$ belongs to $\mathcal{L}(I)$;
  \item[$(C_3):$] the variance function of $\varpi(\cdot)$ denoted by $\mathrm{Var}[\cdot]$ is bounded on $I$.
\end{description}
Note that the white Gaussian noise and the Poisson noise satisfy these
conditions. Then, we can give the following proposition.

\begin{prop} \label{Prop_convergence_Ts}
Let $\varpi(\cdot)$ be a stochastic process satisfying conditions $(C_1)-(C_3)$, and
$\varpi(t_j)$, for $j=0,\cdots, m$, be a sequence of $\{\varpi(\cdot)\}$ with
an equidistant sampling period $T_s$. If  ${\mathrm{D}}^{\alpha_i}_t g_{n}(t) \in \mathcal{L}^2(I)$, then we have the following convergence in mean square of the noise error contribution in the integral ${Y}_{N}^{\varpi}(n,i+1)$ given in (\ref{Eq_intg2}), for $n=1,\ldots,N$ and $i=0,\ldots, L$:
\begin{equation} \label{Eq_convergence1}
e_{N,m}^{\varpi}(n,i+1) \xLongrightarrow[T_s \rightarrow 0]{\mathcal{L}^2(I)}   \int_{0}^{T}  {\mathrm{D}}^{\alpha_i}_t g_{n}(t)\, \mathrm{E}\left[\varpi(T-t)\right]\, d t,
\end{equation}
where $e_{N,m}^{\varpi}(n,i+1)$ is given in (\ref{Eq_discrete}).
Moreover,
 if $\forall\, t \in I$, $\mathrm{E}\left[\varpi(t)\right]=0$, then we have:
\begin{equation} \label{Eq_convergence2}
e_{N,m}^{\varpi}(n,i+1) \xLongrightarrow[T_s \rightarrow 0]{\mathcal{L}^2(I)} 0.
\end{equation}
\end{prop}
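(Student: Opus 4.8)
The plan is to compute the second moment of the discretization error $e_{N,m}^{\varpi}(n,i+1)$ and show it converges to the second moment of the claimed limit, while the cross term and the limit's variance contribution vanish. First I would set $\phi_n(t):={\mathrm{D}}^{\alpha_i}_t g_{n}(t)$, which by hypothesis lies in $\mathcal{L}^2(I)$, and write the quadrature sum
\begin{equation}
e_{N,m}^{\varpi}(n,i+1) = T_s \sum_{j=0}^m w_j\, \phi_n(t_j)\, \varpi(t_{m-j}).
\end{equation}
Splitting $\varpi(t_{m-j}) = \mathrm{E}[\varpi(t_{m-j})] + \bigl(\varpi(t_{m-j}) - \mathrm{E}[\varpi(t_{m-j})]\bigr)$, the deterministic part is just a Riemann sum for $\int_0^T \phi_n(t)\,\mathrm{E}[\varpi(T-t)]\,dt$, which converges as $T_s\to 0$ because $\phi_n \in \mathcal{L}^2(I)\subset \mathcal{L}^1(I)$ and $\mathrm{E}[\varpi(\cdot)]\in\mathcal{L}(I)$ by $(C_2)$ (one may invoke the convergence of the underlying numerical integration method alluded to in the previous subsection). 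So it remains to show the centered part,
\begin{equation}
R_m := T_s \sum_{j=0}^m w_j\, \phi_n(t_j)\,\bigl(\varpi(t_{m-j}) - \mathrm{E}[\varpi(t_{m-j})]\bigr),
\end{equation}
tends to $0$ in mean square.

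Next I would bound $\mathrm{E}[R_m^2]$ directly. Expanding the square and using condition $(C_1)$ (independence of $\varpi(t)$ and $\varpi(s)$ for $t\neq s$), all off-diagonal terms have zero expectation, leaving only the diagonal:
\begin{equation}
\mathrm{E}\!\left[R_m^2\right] = T_s^2 \sum_{j=0}^m w_j^2\, \phi_n(t_j)^2\, \mathrm{Var}[\varpi(t_{m-j})].
\end{equation}
By condition $(C_3)$ there is a constant $V$ with $\mathrm{Var}[\varpi(t)]\le V$ for all $t\in I$, and the quadrature weights satisfy $w_j = O(1)$ uniformly, so
\begin{equation}
\mathrm{E}\!\left[R_m^2\right] \le V\, \bigl(\max_j w_j^2\bigr)\, T_s \cdot T_s \sum_{j=0}^m \phi_n(t_j)^2 \le C\, T_s \int_0^T \phi_n(t)^2\, dt \,\bigl(1+o(1)\bigr),
\end{equation}
since $T_s\sum_j \phi_n(t_j)^2$ is a Riemann sum converging to $\|\phi_n\|_{\mathcal{L}^2(I)}^2 < \infty$. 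Hence $\mathrm{E}[R_m^2]\le C' T_s \to 0$ as $T_s\to 0$, which gives \eqref{Eq_convergence1}. The statement \eqref{Eq_convergence2} is then immediate: if $\mathrm{E}[\varpi(t)]=0$ for all $t$, the deterministic limit integral vanishes identically.

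The main obstacle is handling the endpoint weights and the quadrature error cleanly: the weights $w_0, w_m$ may be set to zero when $\phi_n$ blows up at $t=0$ (recall ${\mathrm{D}}^{\alpha_i}_t g_n$ can have an integrable singularity at the origin when $\alpha_i\notin\mathbb{N}$), so one must be careful that the Riemann sums $T_s\sum_j \phi_n(t_j)^2$ and $T_s\sum_j \phi_n(t_j)\mathrm{E}[\varpi(T-t)]$ genuinely converge to the stated integrals despite this singularity — this is where the $\mathcal{L}^2(I)$ hypothesis on ${\mathrm{D}}^{\alpha_i}_t g_n$ is essential, and where appeal to the already-stated fact that the numerical integration error tends to $0$ as $m\to\infty$ does the real work. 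Everything else is a routine second-moment computation using $(C_1)$ and $(C_3)$.
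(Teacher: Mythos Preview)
Your argument is correct and is exactly the standard second-moment computation one would expect: decompose into the deterministic mean part (a Riemann sum converging by $(C_2)$ and the $\mathcal{L}^2$ hypothesis) plus a centered part whose variance collapses to the diagonal by $(C_1)$ and is then bounded by $C\,T_s\,\|\phi_n\|_{\mathcal{L}^2}^2$ using $(C_3)$. The paper does not actually write out a proof of this proposition; it simply states that the argument can be obtained in the same way as in \cite{Liu2011b} (and points to a non-standard analysis treatment in \cite{ans}), so your proposal in fact supplies the details the paper omits, and is almost certainly the same computation carried out in the cited reference.
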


The proof of the previous proposition can be obtained
in a similar way to the one given in  \cite{Liu2011b}. Moreover, a similar result was
studied  using the non-standard analysis in \cite{ans}.

Consequently, according to the previous proposition, the noise error contributions can be increasing with respect to the sampling period.

Finally, let us mention  that solving the linear system given in Proposition \ref{Prop_1} in noisy case is related to the matrix perturbation theory. 
 The accuracy and the stability of the proposed estimators not only depend on the noise error contributions, but also depend on the condition number of the associated matrix. This condition number depends both  on the input and the output of the  fractional order system and on the used modulating functions. In general, we should choose
the modulating functions that can give a small condition number. This study is out of the scope of this paper.

\section{Simulation results} \label{section5}

In order to illustrate  the accuracy and robustness with respect to corrupting noises of the
proposed estimators, we present some numerical results in
this section.

Let us consider a fractional order system defined by the following fractional differential equation: $\forall \, t \in [0,8]$,
\begin{equation}
a_0  {\mathrm{D}}^{\alpha_0}_t y(t) + a_1  {\mathrm{D}}^{\alpha_1}_t y(t)  + a_2 {\mathrm{D}}^{\alpha_2}_t y(t)  =  u(t),
\end{equation}
where $a_0=3$, $a_1=2$, $a_2=1$,  $\alpha_0=0$, $\alpha_1=0.5$ and $\alpha_2=1.5$. We assume that the output is  $y(t)=\sin(3t)+1$. Hence, the initial conditions of $y$ are not equal to $0$. Moreover, the expression of the input can be obtained  using (\ref{Eq_Derivative_poly}) and the following formula  (see
\cite{Miller} p. 83):
\begin{equation}\label{}
\begin{split}
&{\mathrm{D}}_{t}^{\alpha_i} \sin(3t) =\\&    \frac{3 t^{1-\alpha_i}}{\Gamma(2-\alpha_i)}\,{}_{1}{\mathrm{F}}_2\left(1;\frac{1}{2}(2-\alpha_i),\frac{1}{2}(3-\alpha_i);-\frac{1}{4}3^2 t^2\right),
\end{split}
\end{equation}
where ${}_{p}{\mathrm{F}}_q\left(c_1,\ldots,c_p; d_1,\ldots,d_q; \cdot\right)$
is the generalized hypergeometric function given in
\cite{Miller} p. 303.

In our identification procedure, we use the following modulating functions, the fractional derivative of which are simple to calculate:
\begin{equation}
g_{n}(t)=(T-t)^{6+n} t^{6+N+1-n},
\end{equation}
where  $n=1,2,\cdots,N$ with $N=13$.  The value of $T$ is taken to be equal to the time where we estimate the parameters. Let us recall that this kind of functions has been obtained when the algebraic parametric estimation technique was applied to the
parameter estimation for signals described by
differential equations \cite{Mboup2009b}.

In the two following examples, we estimate the parameters $a_0$, $a_1$ and $a_2$  using the noisy observation of $y$ where the noise is a high frequency sinusoidal noise and a gaussian noise, respectively. Moreover, we apply the trapezoidal rule to numerically approximate the integrals in our estimators.

{\textbf{Example 1.}} In this example, we assume that $y^{\varpi}(t_j)=y(t_j)+0.5\sin(10^3 t_j)$ with  $T_s=0.01$. We can see this discrete noisy signal in Figure \ref{Fig_signal1}. In our identification procedure, we take $T=t_i$ for $t_i \in [1.8,8]$.
The obtained estimations and the associated relative estimation errors are shown in Figure \ref{Fig_estimation1} and Figure \ref{Fig_error1}.
Hence, we can see that the proposed estimators  can cope with a high frequency sinusoidal noise.

{\textbf{Example 2.}}
In this example, we assume that
 $y^{\varpi}(t_j)=y(t_j)+\sigma\varpi
(t_j)$, where $T_s=0.01$,  $\varpi(t_j)$ is simulated from a zero-mean white Gaussian $iid$ sequence, and $\sigma \in \mathbb{R}_+^*$ is
adjusted in such a way that the signal-to-noise ratio  is equal to
$SNR=22\text{dB}$. This noisy observation is shown in Figure \ref{Fig_signal2}. The  obtained estimations and the associated relative estimation errors are given  in Figure \ref{Fig_estimation2} and Figure \ref{Fig_error2}.
We can see that the proposed estimators are robust against a gaussian noise.

\begin{figure}[h!]
\centering
\framebox{\parbox{2.5in}{
 {\includegraphics[scale=0.44]{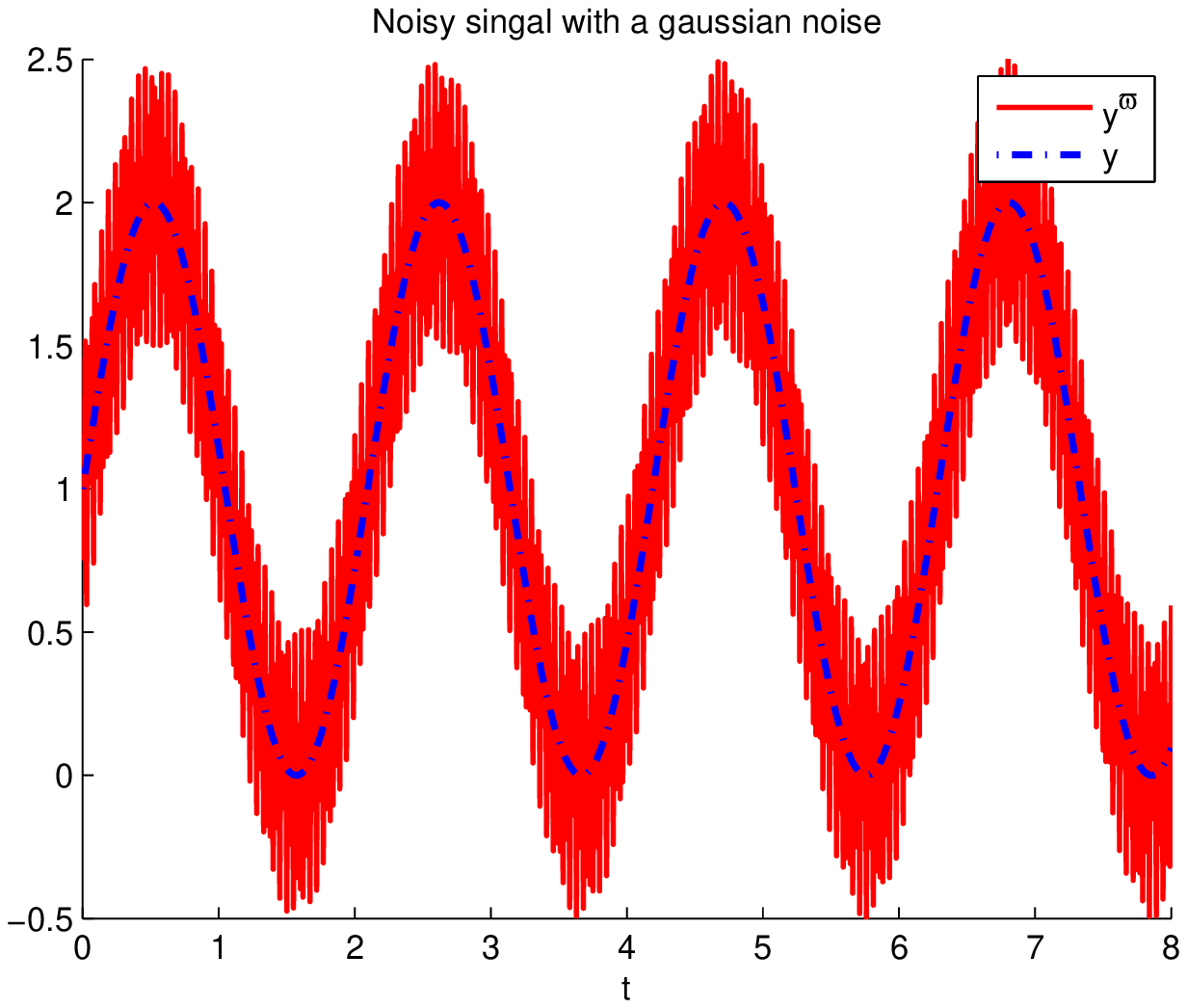}}}}
\caption{The noise-free output and its noisy observation.} \label{Fig_signal1}
\end{figure}

\begin{figure}[h!]
\centering
\framebox{\parbox{2.5in}{
 {\includegraphics[scale=0.44]{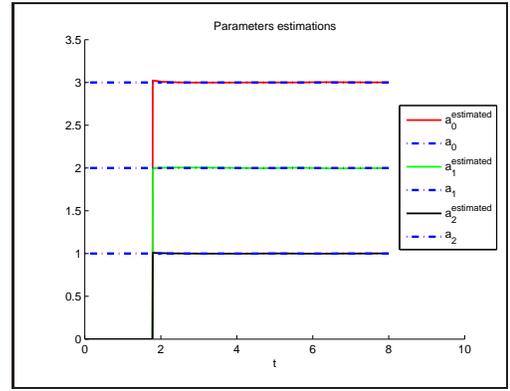}}}}
\caption{The exact parameters and their estimations with a sinusoidal noise.} \label{Fig_estimation1}
\end{figure}

\begin{figure}[h!]
\centering
\framebox{\parbox{2.5in}{
 {\includegraphics[scale=0.44]{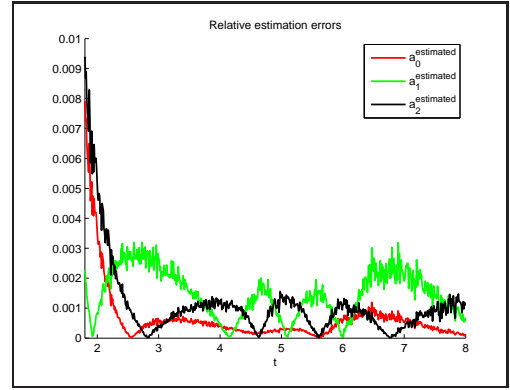}}}}
\caption{Relative estimation errors in the  sinusoidal noise case.} \label{Fig_error1}
\end{figure}

\begin{figure}[h!]
\centering
\framebox{\parbox{2.5in}{
 {\includegraphics[scale=0.44]{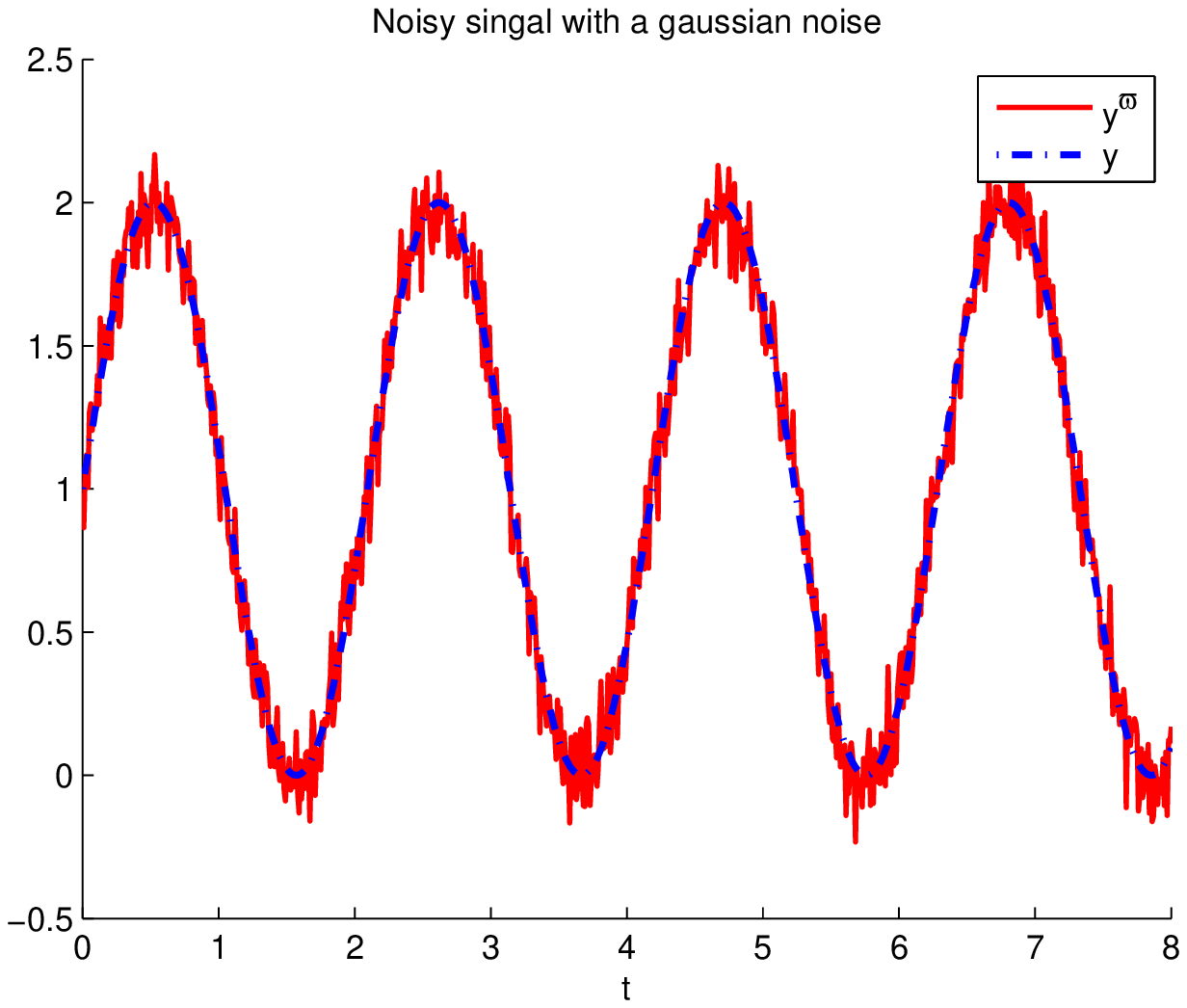}}}}
\caption{The noise-free output and its noisy observation.} \label{Fig_signal2}
\end{figure}

\section{Conclusions} \label{section6}

In this paper, the modulating functions method has been generalized to the on-line identification problem of fractional order systems. Using this method, the unknown parameters have been estimated by solving a linear system of algebraic equations involving the input and the noisy output. Thanks to the properties of modulating functions, we do not need to estimate the fractional derivatives of the output, to calculate the ones of the input. We do not need to know the initial conditions either. Moreover, the integral given in the estimators can reduce the noise effects due to a high frequency sinusoidal noise or a class of stochastic processes. The efficiency and the robustness against corrupting  noises have been confirmed by numerical examples. In order to improve the robustness against noises, some methods such as the instrumental variable method will be applied \cite{Garnier}.   It was mentioned that the proposed estimators also depend on the choice of modulating functions. This problem will be studied in the future work. Moreover, the estimation of the fractional derivative orders in a time-delayed fractional order system will be considered.

\begin{figure}[h!]
\centering
\framebox{\parbox{2.5in}{
 {\includegraphics[scale=0.44]{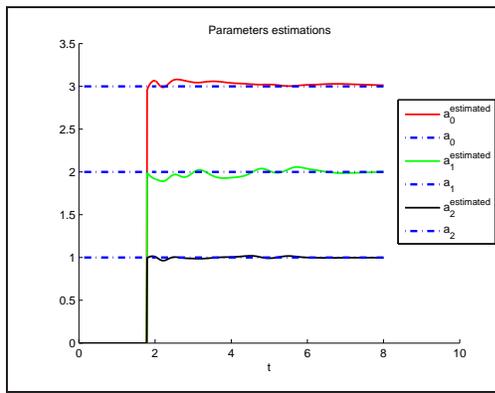}}}}
\caption{The exact parameters and their estimations with a gaussian noise.} \label{Fig_estimation2}
\end{figure}

\begin{figure}[h!]
\centering
\framebox{\parbox{2.5in}{
 {\includegraphics[scale=0.44]{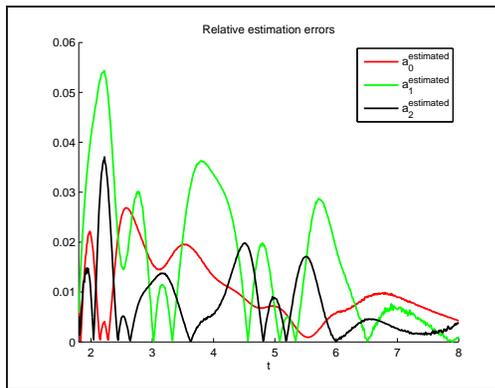}}}}
\caption{Relative estimation errors in the gaussian noise case.} \label{Fig_error2}
\end{figure}

\end{document}